\documentclass[11pt]{amsart}

\setlength{\oddsidemargin}{0cm} \setlength{\evensidemargin}{0in}
\setlength{\textwidth}{15.6cm} \setlength{\textheight}{23cm}
\setlength{\headheight}{0cm} \setlength{\topmargin}{-1cm}
\setlength{\parskip}{2mm}

\newtheorem{theorem}{Theorem}[section]
\newtheorem{lemma}[theorem]{Lemma}

\theoremstyle{definition}
\newtheorem{definition}[theorem]{Definition}

\newtheorem{corollary}[theorem]{Corollary}
\newtheorem{remark}[theorem]{Remark}

\theoremstyle{remark}

\newcommand{\be}{\begin{equation}}
\newcommand{\ee}{\end{equation}}

\numberwithin{equation}{section}



\begin{document}

\title{Circle action and some vanishing results on manifolds}

\author{Ping Li}
\address{Department of Mathematics, Tongji University, Shanghai 200092, China}
\email{pingli@tongji.edu.cn}
\thanks{The first author is supported by Program for Young Excellent
Talents in Tongji University.}

\author{Kefeng Liu} \address{Department of Mathematics, University of California at Los Angeles, Los Angeles, CA 90095, USA and Center of Mathematical Science,
Zhejiang University, 310027, China} \email{liu@math.ucla.edu}

\subjclass[2000]{19J35, 57R20, 58J20.}


\keywords{circle action, signature, twisted signature operator,
rigidity}

\begin{abstract}
Kawakubo and Uchida showed that, if a closed oriented
$4k$-dimensional manifold $M$ admits a semi-free circle action such
that the dimension of the fixed point set is less than $2k$, then
the signature of $M$ vanishes. In this note, by using $G$-signature
theorem and the rigidity of the signature operator, we generalize
this result to more general circle actions. Combining the same idea
with the remarkable Witten-Taubes-Bott rigidity theorem, we explore
more vanishing results on spin manifolds admitting such circle
actions. Our results are closely related to some earlier results of
Conner-Floyd, Landweber-Stong and Hirzebruch-Slodowy.
\end{abstract}

\maketitle

\section{Introduction and results}
Unless otherwise stated, all the manifolds discussed in this paper
are closed smooth manifolds and all involutions and circle actions
on the manifolds are smooth. We denote by superscripts the
corresponding dimensions of the manifolds.

The following is a classical result of Conner and Floyd (\cite{CF},
$\S 27.2$).

\begin{theorem}[Conner-Floyd]\label{CF}
Suppose $g:~M^{2n}\rightarrow M^{2 n}$ is an involution on a
manifold and $M^{g}$ is the fixed point set of $g$. If
$\textrm{dim}(M^{g})< n$, then the Euler characteristic of $M$ is
even.
\end{theorem}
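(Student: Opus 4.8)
The plan is to trade the parity statement for $\chi(M)$ for one about the signature, to kill the signature by the Atiyah--Singer $G$-signature theorem, and to use the hypothesis $\dim M^{g}<n$ to force every fixed-point contribution to vanish for dimensional reasons. I will assume $M$ connected and oriented; the general case is handled by the cobordism-theoretic argument of Conner and Floyd, recalled at the end.

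First I would carry out the easy reductions. Write $\dim M=2n$; by Poincar\'e duality $\chi(M)=2\sum_{i<n}(-1)^{i}b_{i}(M)+(-1)^{n}b_{n}(M)$, so $\chi(M)\equiv b_{n}(M)\pmod 2$. If $n$ is odd, the cup-product form on $H^{n}(M;\mathbf{Q})$ is skew-symmetric and nondegenerate, hence $b_{n}(M)$ is even and there is nothing to prove. So I may assume $n=2k$, i.e.\ $\dim M=4k$; then the cup-product form on $H^{2k}(M;\mathbf{Q})$ is symmetric and nondegenerate, with $b_{2k}(M)=b^{+}+b^{-}$ and $\operatorname{sign}(M)=b^{+}-b^{-}$, so $\chi(M)\equiv b_{2k}(M)\equiv\operatorname{sign}(M)\pmod 2$. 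It thus suffices to show that $\operatorname{sign}(M)$ is even.

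Next I would bring in $g$. If $g$ reverses orientation, then $g^{*}$ is an anti-isometry of the nondegenerate symmetric intersection form $Q$ on $H^{2k}(M;\mathbf{R})$, its $(\pm1)$-eigenspaces are totally $Q$-isotropic, $Q$ is hyperbolic, and $\operatorname{sign}(M)=0$. So assume $g$ preserves orientation. Then each component $F_{i}$ of $M^{g}$ has dimension $d_{i}\le\dim M^{g}<2k$, and its normal bundle $\nu_{i}$ has even rank $k_{i}=4k-d_{i}>2k$, with $g$ acting on $\nu_{i}$ as $-\mathrm{id}$. Applying the $G$-signature theorem to $\langle g\rangle\cong\mathbf{Z}/2$,
\[
\operatorname{sign}(g,M)\;=\;\sum_{i}\big\langle\, L(TF_{i})\cdot\Theta(\nu_{i})\,,\ [F_{i}]\,\big\rangle,
\]
where $\operatorname{sign}(g,M)$ is the signature of the form $x\mapsto Q(x,g^{*}x)$ and $\Theta(\nu_{i})$ is the normal defect: it is a polynomial in the Pontryagin classes of $\nu_{i}$ whose lowest-degree component sits in degree $\ge k_{i}$, because $g$ rotates each normal two-plane by the angle $\pi$, so that the degree-zero part of the defect --- the contribution of an isolated fixed point of an involution --- vanishes. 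Since $k_{i}>2k>d_{i}$, the class $L(TF_{i})\cdot\Theta(\nu_{i})$ has no component in degree $d_{i}$, the $i$-th summand is $0$, and $\operatorname{sign}(g,M)=0$. Finally, decomposing $H^{2k}(M;\mathbf{R})$ into the $Q$-orthogonal $(\pm1)$-eigenspaces $V_{\pm}$ of $g^{*}$ gives $\operatorname{sign}(M)-\operatorname{sign}(g,M)=2\operatorname{sign}\big(Q|_{V_{-}}\big)$; hence $\operatorname{sign}(M)$ is even, and so is $\chi(M)$.

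The step I expect to be the main obstacle is this degree bookkeeping in the $G$-signature formula: one must verify carefully that, precisely because $g$ acts by $-\mathrm{id}$ on each normal bundle, the normal part of the local term at $F_{i}$ is a power series in the Pontryagin roots with vanishing constant term, so that a normal bundle of rank exceeding $2k$ pushes the whole local class into degrees above $\dim F_{i}$. The other ingredients --- the reduction to signatures, the orientation-reversing case, the eigenspace accounting --- are routine. I would also record Conner and Floyd's original, cobordism-theoretic route: starting from $\chi(M)\equiv w_{2n}(M)[M]\pmod 2$ and the classification of the unoriented bordism class of an involution by its fixed-point data $\{(F_{i},\nu_{i})\}$, the hypothesis $\dim F_{i}<n$ again forces the vanishing, and that version needs no orientability assumption.
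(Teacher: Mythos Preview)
The paper does not actually prove Theorem~\ref{CF}; it is quoted as a classical result of Conner--Floyd with a reference to \cite{CF}, and the paper only remarks (in the Introduction and in Section~3) that Atiyah--Singer reproved it via the $G$-signature theorem in the special case where $M$ is oriented, $n$ is even, and $g$ preserves orientation. Your proposal is precisely an elaboration of that Atiyah--Singer route: the vanishing of $\operatorname{sign}(g,M)$ because the normal contribution on each $F_i$ lives in degree $\ge\operatorname{rank}\nu_i>\dim F_i$ is exactly Corollary~6.13 of \cite{AS}, which the paper invokes in Section~3 with the factor $e\big(N^{g}(-1)\big)$. You go further than the paper's sketch by supplying the elementary reductions $\chi(M)\equiv b_{n}(M)\equiv\operatorname{sign}(M)\pmod 2$, disposing of odd $n$ by skew-symmetry, handling the orientation-reversing case directly, and extracting the parity of $\operatorname{sign}(M)$ from $\operatorname{sign}(g,M)=0$ via the eigenspace splitting; you also correctly note that the unoriented case still requires Conner--Floyd's bordism argument. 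One small sharpening: the normal defect for the $(-1)$-eigenbundle is $\prod_j\tanh(y_j/2)$, which is \emph{odd} in each Pontryagin root $y_j$, so it is not literally a polynomial in the Pontryagin classes --- rather its lowest term is the Euler class $e(\nu_i)$ (exactly the $e\big(N^{g}(-1)\big)$ in the paper's display), which is why the orientability bookkeeping you flag as the main obstacle is indeed the delicate point.
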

Here by $\textrm{dim}(M^{g})$ we mean the dimension of highest
dimensional connected component of $M^{g}$.

Using their famous $G$-signature theorem, Atiyah and Singer reproved
(\cite{AS}, p.582-p.583) Theorem \ref{CF} when $n$ is even, $M$ is
oriented and $g$ is orientation preserving.

We recall that a circle action ($S^{1}$-action) is called
\emph{semi-free} if it is free on the complement of the fixed point
set or equivalently, the isotropy subgroup of any non fixed point on
the manifold is trivial. Using bordism techniques developed by
Conner and Floyd in \cite{CF}, Kawakubo and Uchida showed the
following result (\cite{KU}, Theorem 1.2), which could be taken as a
counterpart in the circle case to Theorem \ref{CF} in some sense.

\begin{theorem}[Kawakubo-Uchida]\label{KU}
Suppose $M^{4k}$ admits a semi-free $S^{1}$-action and $M^{S^{1}}$
is the fixed point set of this action. If $\textrm{dim}(M^{S^{1}})<
2k$, then the signature of $M$, $\textrm{sign}(M^{4k})$, is zero.
\end{theorem}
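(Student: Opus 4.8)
The plan is to apply the Atiyah--Singer $G$-signature theorem to the involution $\tau\in S^1$ of order two, to use semi-freeness in order to identify $M^{\tau}$ with $M^{S^1}$, and then to invoke the rigidity of the signature operator to pass from the $\tau$-equivariant signature back to $\textrm{sign}(M)$. First I would note that if $\tau$ fixes a point, its isotropy group contains $\tau$ and hence, by semi-freeness, equals all of $S^1$; thus $M^{\tau}=M^{S^1}$, and I write $M^{S^1}=\bigsqcup_\alpha F_\alpha$ for its (closed) components. The $S^1$-action endows each normal bundle $\nu_\alpha$ of $F_\alpha$ in $M$ with a complex structure on which $\tau$ acts as the scalar $-1$; in particular $\nu_\alpha$, and hence $F_\alpha$, are oriented. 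Writing $\ell_\alpha$ for the complex rank of $\nu_\alpha$, one has $2\ell_\alpha=\textrm{codim}\,F_\alpha=4k-\dim F_\alpha$, and the hypothesis $\dim F_\alpha<2k$ supplies the numerical inequality that drives the proof:
\[
2\ell_\alpha \;=\; 4k-\dim F_\alpha \;>\; 2k \;>\; \dim F_\alpha .
\]

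Next I would feed $\tau$ into the $G$-signature theorem, which writes $\textrm{sign}(\tau,M)$ as a sum over the $F_\alpha$ of local terms, the contribution of $F_\alpha$ having the shape
\[
\varepsilon_\alpha\,\Big\langle\, L(F_\alpha)\cdot\prod_{j=1}^{\ell_\alpha}\tanh\!\big(\tfrac12\, x^{(\alpha)}_j\big),\ [F_\alpha]\,\Big\rangle,\qquad \varepsilon_\alpha=\pm1,
\]
where $x^{(\alpha)}_1,\dots,x^{(\alpha)}_{\ell_\alpha}$ are the Chern roots of $\nu_\alpha$; the hyperbolic tangent arises as the specialization at eigenvalue $-1$ of the normal-bundle factor, via the identity $\coth\!\big(z+\tfrac{i\pi}{2}\big)=\tanh z$. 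Since $\tanh(\tfrac12 u)$ vanishes to first order at $u=0$, the class $\prod_j\tanh(\tfrac12\, x^{(\alpha)}_j)$ equals $e(\nu_\alpha)$ times a characteristic class of $\nu_\alpha$ of non-negative degree, where $e(\nu_\alpha)=c_{\ell_\alpha}(\nu_\alpha)$ is the Euler class and lies in $H^{2\ell_\alpha}(F_\alpha)$. But $2\ell_\alpha>\dim F_\alpha$, so $H^{2\ell_\alpha}(F_\alpha)=0$; hence $e(\nu_\alpha)=0$ and the contribution of $F_\alpha$ vanishes. Summing over $\alpha$, $\textrm{sign}(\tau,M)=0$.

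Finally, since $S^1$ is connected it acts trivially on $H^*(M)$, hence trivially on the virtual cohomology whose character is the equivariant signature; this is precisely the rigidity of the signature operator, and it gives $\textrm{sign}(\tau,M)=\textrm{sign}(M)$, whence $\textrm{sign}(M)=0$. The step I expect to demand the most care --- and the one where semi-freeness is genuinely used --- is the decision to localize at the order-two element. For a generic $g\in S^1$ the normal contribution of $F_\alpha$ involves $\coth$ in place of $\tanh$, has a nonzero constant term, and is therefore \emph{not} divisible by $e(\nu_\alpha)$, so the dimension bound has no effect; it is exactly the identity $\coth(z+\tfrac{i\pi}{2})=\tanh z$, together with the equality $M^{\tau}=M^{S^1}$ (which fails for non-semi-free actions), that makes the two ends meet. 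One should also check the precise form and normalization of the involution term in the $G$-signature theorem, but only its divisibility by the normal Euler class is needed, and that is independent of normalization.
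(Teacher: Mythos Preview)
Your argument is correct and is essentially the paper's own proof of its Theorem~1.5 (which specializes to this statement when the action is semi-free): localize the $G$-signature formula at the element $g=-1$, observe that the normal factor becomes $\prod_j\frac{1-e^{-c_1(L_j)}}{1+e^{-c_1(L_j)}}$ and hence is divisible by $e(\nu F)$, and conclude via the rigidity of the signature operator.

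The one organizational difference worth noting is the order in which rigidity is invoked. You apply the $G$-signature theorem directly at $\tau=-1$, which forces you to first prove $M^{\tau}=M^{S^1}$ using semi-freeness. The paper instead applies the $G$-signature theorem at topological generators $g$ (for which $M^{g}=M^{S^1}$ automatically), uses rigidity to promote the resulting fixed-point expression to an identity in the \emph{indeterminate} $g$, and only then substitutes $g=\xi$ with $\xi^{k_j}=-1$. Your route is slightly more direct for the semi-free case; the paper's route is what allows the generalization to ``prime'' actions, since for such actions $M^{\xi}$ may strictly contain $M^{S^1}$ and one cannot localize at $\xi$ directly.
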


Our first purpose in this note is, by closer looking at the
$G$-signature theorem in the circle case, to generalize Theorem
\ref{KU} to more general cases. Before stating our first main
result, we will introduce some notations, which will be used
throughout this paper without further explanation.

Suppose $M^{2n}$ is a oriented manifold admitting a $S^{1}$-action.
Let $F^{2m}$ be a connected component of this action. With respect
to this $S^{1}$-action, the tangent bundle of $M^{2n}$ restricted to
$F^{2m}$, $\textrm{T}M^{2n}\big|_{F^{2m}}$, has the following
equivariant decomposition:

$$\textrm{T}M^{2n}\big|_{F^{2m}}=L_{1}\oplus\cdots\oplus L_{n-m}\oplus\textrm{T}F^{2m},$$
where each $L_{i}$ is a real $2$-plane bundle of $F^{2m}$. We can
identify $L_{i}$ with a complex line bundle relative to which the
representation of $S^{1}$ on each fiber of $L_{i}$ is given by
$e^{\sqrt{-1}\theta}\rightarrow e^{\sqrt{-1}k_{i}\theta}$ with
$k_{i}\in \mathbb{Z}-\{0\}$. These $k_{1},\cdots,k_{n-m}$ are called
\emph{weights} of this $S^{1}$-action on the connected component
$F^{2m}$ and uniquely determined up to
signs.

\begin{definition}\label{def}
Let the notations be as above. We call a $S^{1}$-action \emph{prime}
if there exists a number $\xi\in S^{1}$ such that, for any $k\in
\bigcup_{F^{2m}}\{k_{1},\cdots,k_{n-m}\},$  we have $\xi^{k}=-1$.
\end{definition}
\begin{remark}
Note that the weights of a semi-free circle action are $\pm 1$.
Hence semi-free circle actions are prime.
\end{remark}
Now we can state our first result, which generalizes Theorem
\ref{KU}.

\begin{theorem}\label{result1}
Suppose $M^{4k}$ admits a prime $S^{1}$-action and $M^{S^{1}}$ is
the fixed point set of this action. If $\textrm{dim}(M^{S^{1}})<
2k$, then $\textrm{sign}(M^{4k})=0$.
\end{theorem}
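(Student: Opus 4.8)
The plan is to apply the $G$-signature theorem to the single element $\xi\in S^{1}$ supplied by the prime hypothesis, and then to invoke the rigidity of the signature operator. (If $M^{S^{1}}=\emptyset$ the circle acts without stationary points, and then $\textrm{sign}(M^{4k})=0$ by a classical argument, the $L$-class being pulled back from the orbit space; so assume $M^{S^{1}}\neq\emptyset$.) Write $d=\textrm{ord}(\xi)$. From $\xi^{k}=-1$ it follows that $\xi^{k}$ has order $2$, so $d$ is even and every weight $k$ satisfies $\tfrac d2\mid k$ with $2k/d$ odd; in particular every weight is $\equiv\tfrac d2\pmod d$. The first, and essential, step is to prove
$$M^{\xi}=M^{S^{1}} .$$
I would do this by induction on $\textrm{dim}(M)$: if it fails, take $x\in M^{\xi}\setminus M^{S^{1}}$ with (finite) isotropy $\mathbb Z_{r}$; then $d\mid r$, so for any prime $p\mid d$ the submanifold $Y:=M^{\mathbb Z_{p}}$ properly contains $M^{S^{1}}$. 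Now $Y$ carries the induced action of $S^{1}/\mathbb Z_{p}\cong S^{1}$ with $Y^{S^{1}}=M^{S^{1}}$; at a point of $M^{S^{1}}$ its weights are the numbers $k_{i}/p$ ranging over the original weights $k_{i}$ divisible by $p$, and since $(\xi^{p})^{k_{i}/p}=\xi^{k_{i}}=-1$ this induced action is again prime. As $\textrm{dim}(Y)<\textrm{dim}(M)$, the inductive hypothesis gives $Y^{\xi}=Y^{S^{1}}=M^{S^{1}}$; but $M^{\xi}\subseteq M^{\mathbb Z_{p}}=Y$, whence $M^{\xi}=Y^{\xi}=M^{S^{1}}$ — a contradiction.

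Granting $M^{\xi}=M^{S^{1}}$, the $G$-signature theorem (Atiyah--Singer) expresses $\textrm{sign}(\xi,M^{4k})$ as a sum of local contributions, one over each connected component $F^{2m}$ of $M^{S^{1}}$. Because $\xi^{k_{i}}=-1$ for every weight $k_{i}$ of $F^{2m}$, the element $\xi$ acts as $-\textrm{Id}$ on the entire normal bundle $L_{1}\oplus\cdots\oplus L_{2k-m}$; so this is exactly the $(-1)$-eigenbundle of $\xi$, and the term of $F^{2m}$ takes the form
$$\Big\langle\;\mathcal{L}\big(\textrm{T}F^{2m}\big)\cdot\prod_{i=1}^{2k-m}\tanh\!\big(c_{1}(L_{i})\big)\;,\;\;\big[F^{2m}\big]\;\Big\rangle ,$$
$\mathcal{L}$ being the Hirzebruch $L$-polynomial. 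Each $\tanh(c_{1}(L_{i}))=c_{1}(L_{i})-\tfrac13 c_{1}(L_{i})^{3}+\cdots$ has no degree-zero part; indeed $\prod_{i=1}^{2k-m}\tanh(c_{1}(L_{i}))$ equals the Euler class $c_{1}(L_{1})\cdots c_{1}(L_{2k-m})$ times a series in Pontryagin classes, so it lies in $\bigoplus_{j\geq 2k-m}H^{2j}(F^{2m};\mathbb Q)$; multiplying by $\mathcal{L}(\textrm{T}F^{2m})\in\bigoplus_{j\geq0}H^{4j}$ keeps the smallest possible degree equal to $2(2k-m)=4k-2m$. Since $2m=\textrm{dim}(F^{2m})<2k$, i.e.\ $k-m\geq1$, we get $4k-2m=2m+4(k-m)>2m$, so the degree-$2m$ part of the integrand is zero; the contribution of every $F^{2m}$ therefore vanishes, and $\textrm{sign}(\xi,M^{4k})=0$.

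Finally, by the rigidity of the signature operator (Atiyah--Hirzebruch, Atiyah--Singer) the equivariant signature $g\mapsto\textrm{sign}(g,M^{4k})$ is constant on $S^{1}$, so $\textrm{sign}(M^{4k})=\textrm{sign}(\xi,M^{4k})=0$, as claimed. I expect the identity $M^{\xi}=M^{S^{1}}$ to be the real obstacle: it is exactly what permits $\xi$ to be fed into the $G$-signature theorem in place of a generic element, and it is the only place where the prime hypothesis is used in full — for a generic $g\in S^{1}$ the fixed set is already $M^{S^{1}}$, but $g$ then rotates the normal bundles by generic angles, for which the local terms are complicated rational functions of $g$ rather than the Euler-class-divisible expressions that make the dimension count work. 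Everything else is a direct reading of the $G$-signature theorem together with the vanishing of $\tanh$ at the origin.
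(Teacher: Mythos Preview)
Your endgame matches the paper's: once $g^{k_j}=-1$ for every weight, each normal factor becomes $\frac{1-e^{-c_1(L_j)}}{1+e^{-c_1(L_j)}}=\tanh\!\big(c_1(L_j)/2\big)$ (not $\tanh(c_1(L_j))$, but this is cosmetic), the Euler class $e(\nu F^{2m})\in H^{4k-2m}(F^{2m})$ factors out, and $4k-2m>2m$ kills every term. The difference lies in how the substitution $g=\xi$ is justified. You argue geometrically, first proving $M^{\xi}=M^{S^{1}}$ so that the $G$-signature theorem applied to the specific element $\xi$ already localizes on $M^{S^{1}}$. The paper works algebraically instead: its key Lemma observes that the localization sum over $M^{S^{1}}$, read as a rational function of an \emph{indeterminate} $g$, equals $\textrm{sign}(M)$ identically --- because for every topological generator $g$ one has $M^{g}=M^{S^{1}}$, whence the $G$-signature theorem plus rigidity give the value $\textrm{sign}(M)$ on a dense subset of $S^{1}$. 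One then simply sets $g=\xi$ in this formal identity; no knowledge of $M^{\xi}$ is required at all, and the paper's concluding section makes exactly this point.

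Your induction for $M^{\xi}=M^{S^{1}}$ has a gap. The step $\dim Y<\dim M$ needs the $\mathbb Z_{p}$-action on $M$ to be nontrivial, i.e.\ the $S^{1}$-action to be effective --- a harmless reduction, but one you should state. More seriously, $Y=M^{\mathbb Z_{p}}$ need not be connected, and the component $Y_{0}\ni x$ might a priori miss $M^{S^{1}}$; on such a $Y_{0}$ the prime condition is vacuous (no $S^{1}$-fixed points, hence no weights to constrain), so the inductive claim yields nothing. Indeed the assertion ``prime $S^{1}$-action $\Rightarrow N^{\xi}=N^{S^{1}}$'' is false for disconnected $N$: take $N=S^{2}\sqcup T^{2}$, with the standard rotation on $S^{2}$ and $t\cdot(z,w)=(t^{2}z,w)$ on $T^{2}=S^{1}\times S^{1}$; then $N^{S^{1}}$ consists of the two poles of $S^{2}$, all weights are $\pm1$, the action is prime with $\xi=-1$, yet $N^{-1}$ is those two poles together with all of $T^{2}$. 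To salvage your route you would have to show separately that the component of $M^{\mathbb Z_{p}}$ through $x$ meets $M^{S^{1}}$ --- plausible, but an additional nontrivial step. The paper's rational-function device sidesteps the whole issue.
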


We will prove this result in Section 2. The signature of a oriented
manifold can be realized as an index of some elliptic operator
(\cite{AS}, \S 6), now called signature operator. Besides the
$G$-signature theorem, the key ingredient of the proof of Theorem
\ref{result1} is the rigidity of the signature operator (see Section
2 for more details). The rigidity of signature operator is only the
beginning of a remarkable rigidity theorem: Witten-Taubes-Bott
rigidity theorem. Our second purpose in this note is, by using this
rigidity theorem, to replace the conclusion of $\textrm{sign}(M)=0$
in Theorem \ref{result1} by those of vanishing indices of some
twisted signature operators.

In order to state our second result, let us begin with the rigidity
of elliptic operators.

Let $D:~\Gamma(E)\rightarrow\Gamma(F)$ be an elliptic operator
acting on sections of complex vector bundles $E$ and $F$ over a
manifold $M$. Ellipticity guarantees that both $\textrm{ker}(D)$ and
$\textrm{coker}(D)$ are finite-dimensional. Then the index of $D$ is
defined as
$$\textrm{ind}(D)=\textrm{dim}_{\mathbb{C}}\textrm{ker}(D)-\textrm{dim}_{\mathbb{C}}\textrm{coker}(D).$$
If $M$ admits an $S^{1}$-action preserving $D$, i.e., acting on $E$
and $F$ and commuting with $D$, then both $\textrm{ker}(D)$ and
$\textrm{coker}(D)$ admit an $S^{1}$-action and hence are
$S^{1}$-modules. Therefore the virtual complex vector space
$\textrm{ker}(D)-\textrm{coker}(D)$ has a Fourier decomposition into
a finite sum of complex one-dimensional irreducible representations
of $S^{1}$:
$$\textrm{ker}(D)-\textrm{coker}(D)=\sum a_{i}\cdot L^{i},$$
where $a_{i}\in\mathbb{Z}$ is the representation of $S^{1}$ on
$\mathbb{C}$ given by $\lambda\mapsto\lambda^{i}$. The equivariant
index of $D$ at $g\in S^{1}$, $\textrm{ind}(g, D)$, is defined to be
$$\textrm{ind}(g, D)=\sum a_{i}\cdot g^{i}.$$
The elliptic operator $D$ is called \emph{rigid} with respect to
this $S^{1}$-action if $a_{i}=0$ for all $i\neq 0$, i.e.,
$\textrm{ker}(D)-\textrm{coker}(D)$ consists of the trivial
representation with multiplicity $a_{0}$. Consequently,
$\textrm{ind}(g, D)\equiv\textrm{ind}(D)$ for any $g\in S^{1}$. An
elliptic operator is called \emph{universally rigid} if it is rigid
with respect to \emph{any} $S^{1}$-action. The fundamental examples
of universally rigid operators are signature operator and Dirac
operator (on spin manifolds). The reason for the former is that both
of its kernel and cokernel can be identified with subspaces of the
deRham cohomology group (\cite{AS}, $\S 6$) on which $S^{1}$ always
induces a trivial action. The latter is a classical result of Atiyah
and Hirzebruch \cite{AH}.

Let $\Omega_{\mathbb{C}}^{+}$ and $\Omega_{\mathbb{C}}^{-}$ be the
even and odd complex differential forms on a oriented Riemann
manifold $M^{2n}$ under the Hodge $\ast$-operator. Then the
signature operator
$$d_{s}:~\Omega_{\mathbb{C}}^{+}\rightarrow\Omega_{\mathbb{C}}^{-}$$
is elliptic and the index of $d_{s}$ equals to $\textrm{sign}(M)$
(\cite{AS}, \S 6).

Let $W$ be a complex vector bundle over $M$. By means of a
connection on $W$, the signature operator can be extended to a
twisted operator (\cite{Pa}, IV, $\S 9$)
$$d_{s}\otimes W:~\Omega_{\mathbb{C}}^{+}(W)\rightarrow\Omega_{\mathbb{C}}^{-}(W).$$
This operator is also elliptic and the index of $d_{s}\otimes W$ is
denoted by $\textrm{sign}(M, W)$.

Let $T_{\mathbb{C}}$ be the complexified tangent bundle of $M$. For
an indeterminate $t$, set
$$\Lambda_{t}T_{\mathbb{C}}=\sum_{k=0}^{\infty}t^{k}\Lambda^{k}T_{\mathbb{C}},\qquad S_{t}T_{\mathbb{C}}=\sum_{k=0}^{\infty}t^{k}S^{k}T_{\mathbb{C}},$$
where $\Lambda^{k}T_{\mathbb{C}}$ and $S^{k}T_{\mathbb{C}}$ are the
$k$-th exterior power and symmetry power of $T_{\mathbb{C}}$
respectively (\cite{At}, $\S
3.1$).

Let $R_{i}$ be the sequence of bundles defined by the formal
series
$$\sum_{i=0}^{+\infty}q^{i}R_{i}=\bigotimes_{i=1}^{+\infty}\Lambda_{q^{i}}T_{\mathbb{C}}\otimes\bigotimes_{j=1}^{+\infty}S_{q^{j}}T_{\mathbb{C}}.$$
The first few terms of this sequence
are
$$R_{0}=1,\qquad R_{1}=2T_{\mathbb{C}},\qquad R_{2}=2(T_{\mathbb{C}}\otimes T_{\mathbb{C}}+T_{\mathbb{C}}),\qquad\cdots$$
With all this understood we have the following rigidity theorem.
\begin{theorem}[Witten-Taubes-Bott]\label{WTB}
For a spin manifold $M^{2n}$, each of the elliptic operators
$d_{s}\otimes R_{i}$ is universally rigid.
\end{theorem}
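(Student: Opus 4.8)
The plan is to prove rigidity by the modular-invariance method of Bott--Taubes and Liu, handling all the operators $d_{s}\otimes R_{i}$ at once. Fix an $S^{1}$-action on $M^{2n}$ and a topological generator $g=e^{2\pi\sqrt{-1}z}$; in practice one takes $g$ of large finite order so that its fixed-point set equals $M^{S^{1}}$, and invokes density of such $g$ at the end. First I would apply the Atiyah--Bott--Singer Lefschetz fixed-point theorem to each elliptic operator $d_{s}\otimes R_{i}$. This writes $\mathrm{ind}(g,d_{s}\otimes R_{i})$ as a sum over the connected components $F$ of $M^{g}$ of local contributions, each an integral over $F$ of characteristic forms of $\mathrm{T}F$ together with contributions of the equivariant normal bundle and its weights. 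Setting $q=e^{2\pi\sqrt{-1}\tau}$ with $\mathrm{Im}\,\tau>0$ and summing over $i$, I would package these into the single two-variable function
\[
\varphi(z,\tau):=\sum_{i\ge 0}q^{i}\,\mathrm{ind}(g,d_{s}\otimes R_{i}).
\]
Since for each fixed power of $q$ the corresponding coefficient $\mathrm{ind}(g,d_{s}\otimes R_{i})$ is a finite Laurent polynomial in $e^{2\pi\sqrt{-1}z}$, the function $\varphi(\cdot,\tau)$ is holomorphic on all of $\mathbb{C}$ and invariant under $z\mapsto z+1$ for each $\tau$.

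The key step is to show that $\varphi$ is also quasi-invariant under the modular substitution $(z,\tau)\mapsto(z/\tau,-1/\tau)$. For this I would rewrite the fixed-point contributions using Jacobi theta functions: the bundle $\bigotimes_{i\ge 1}\Lambda_{q^{i}}T_{\mathbb{C}}\otimes\bigotimes_{j\ge 1}S_{q^{j}}T_{\mathbb{C}}$ twisting $d_{s}$ produces, at a component $F$ with normal Chern roots $x_{\alpha}$ and integer weights $m_{\alpha}$, a product of ratios of values of Jacobi theta functions $\theta(x_{\alpha}+m_{\alpha}z,\tau)$ (and their level-two companions) times an $L$-class-type factor coming from $\mathrm{T}F$. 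Applying the classical $SL(2,\mathbb{Z})$-transformation laws of these theta functions term by term returns $\varphi(z/\tau,-1/\tau)$ to $\varphi(z,\tau)$ up to an exponential anomaly whose exponent is a universal multiple of $z^{2}/\tau$ times an equivariant characteristic number of $M$ assembled from the various $\sum_{\alpha}m_{\alpha}^{2}$ (morally a first Pontryagin-type term). This is exactly the point at which the hypothesis enters: the spin condition $w_{2}(M)=0$ forces this anomalous factor to be trivial, so $\varphi$ is genuinely $SL(2,\mathbb{Z})$-quasi-invariant. Conjugating the periodicity $z\mapsto z+1$ by $(z,\tau)\mapsto(z/\tau,-1/\tau)$, and using invariance under $\tau\mapsto\tau+1$, then yields the second periodicity $z\mapsto z+\tau$.

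With these two facts in hand, $\varphi(\cdot,\tau)$ is, for each $\tau$ in the upper half-plane, a holomorphic function on $\mathbb{C}$ that is doubly periodic with respect to the lattice $\mathbb{Z}+\mathbb{Z}\tau$; by Liouville's theorem it is constant in $z$. Extracting the coefficient of $q^{i}$, this says $\mathrm{ind}(g,d_{s}\otimes R_{i})$ is independent of $g$, i.e.\ $d_{s}\otimes R_{i}$ is rigid with respect to the chosen $S^{1}$-action. Since the action was arbitrary, each $d_{s}\otimes R_{i}$ is universally rigid.

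The genuine obstacle is the middle step: pinning down the precise $SL(2,\mathbb{Z})$-transformation behaviour of the entire fixed-point sum and proving that the spin hypothesis kills the $z^{2}/\tau$-anomaly. This demands careful bookkeeping of the theta transformation formulas together with an argument that the quadratic-in-the-weights terms coming from the various components fit together into a globally defined characteristic number that vanishes when $w_{2}(M)=0$; carrying this out for non-isolated fixed-point sets, rather than for isolated fixed points only, adds the technical cost of doing the computation inside the equivariant cohomology of each $F$. If a self-contained treatment turns out to be too long, one can instead simply cite the theorems of Bott--Taubes and Liu.
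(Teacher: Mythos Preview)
The paper does not prove Theorem~\ref{WTB} at all: it is stated as a known result and attributed to Witten (conjecture), Taubes (first proof), Bott--Taubes (simpler proof), and Liu (modular-invariance proof). Your final sentence, ``one can instead simply cite the theorems of Bott--Taubes and Liu,'' is exactly what the paper does.

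Your proposal is a reasonable outline of Liu's modular-invariance argument, so in that sense you are supplying more than the paper. One point in your sketch is not quite right, however: the exponential $z^{2}/\tau$ anomaly under $(z,\tau)\mapsto(z/\tau,-1/\tau)$ is governed by the \emph{index} (in the sense of Jacobi forms) of $\varphi$, and for the signature operator twisted by the $R_{i}$ this index is zero for structural reasons having nothing to do with spin; it reflects that the relevant theta ratios contribute cancelling quadratic terms. The spin hypothesis enters elsewhere: under the full $SL(2,\mathbb{Z})$ action the theta functions in the fixed-point formula get permuted, and the resulting expressions correspond to Dirac-type operators (or, equivalently, involve half-integer shifts in the theta arguments) that are only well defined when $M$ is spin---concretely, spin forces the sum of the normal weights at each fixed component to be even, which is what makes the transformed expressions single-valued. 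If you revise the sketch, move the spin hypothesis to that step rather than to the vanishing of the $z^{2}/\tau$ factor.
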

This rigidity theorem was conjectured  and given a string-theoretic
interpretation by Witten \cite{Wi2}. It was first proved by Taubes
\cite{Ta}. A simper proof was then presented by Bott and Taubes
\cite{BT}. Using modular invariance of Jacobi functions, the second
author gave a more simpler and unified new proof in \cite{Li1} and
further generalized them in \cite{Li2}.

Using this remarkable rigidity theorem, Hirzebruch and Slodowy
showed that (\cite{HS}, p.317), among other things, if $g$ is an
involution contained in a circle acting on a spin manifold $M^{4k}$
and $\textrm{dim}(M^{g})< 2k,$ then $\textrm{sign}(M, R_{i})=0$ for
all $i$.

We are now ready to state our second main result, which could be
taken as the counterpart to Hirzebruch-Slodowy's above mentioned
result in the circle case.

\begin{theorem}\label{result2}
Suppose $M^{2n}$ is a spin manifold admitting a prime
$S^{1}$-action. If $\textrm{dim}(M^{S^{1}})< n$, then
$\textrm{sign}(M^{2n}, R_{i})=0$ for all $i$.
\end{theorem}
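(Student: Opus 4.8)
The plan is to mimic, with the signature operator $d_s$ replaced by the twisted operator $d_s\otimes R_i$, the strategy for Theorem \ref{result1}: first use rigidity to freeze the equivariant index, then localize it to $M^{S^1}$ by the $G$-signature theorem, and finally specialize to the element $\xi$ produced by primeness, where a degree count kills every local term.

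Fix $i\geq 0$. Since $M^{2n}$ is spin, Theorem \ref{WTB} asserts that $d_s\otimes R_i$ is rigid for the given $S^1$-action, so
\[
\mathrm{ind}(g,d_s\otimes R_i)=\mathrm{ind}(d_s\otimes R_i)=\mathrm{sign}(M,R_i)\qquad\text{for every }g\in S^1 .
\]
Next I apply the equivariant index theorem for $d_s\otimes R_i$ (the twisted form of the $G$-signature theorem of Atiyah and Singer). For $g$ in a small punctured arc around $1\in S^1$ one has $M^g=M^{S^1}$, because such a generic $g$ avoids the finitely many exceptional isotropy subgroups; hence the fixed point formula reads $\mathrm{ind}(g,d_s\otimes R_i)=\sum_F\mathrm{def}(g,F)$, the sum running over the connected components $F^{2m}$ of $M^{S^1}$. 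Here $\mathrm{def}(g,F)$ is the value on $[F]$ of a characteristic class which is the product of the Hirzebruch $\mathcal{L}$-class of $TF$; for each normal line bundle $L_\nu$ (on which $g$ acts with weight $k_\nu$, i.e.\ via $g^{k_\nu}$) a factor $\dfrac{g^{k_\nu}e^{y_\nu}+e^{-y_\nu}}{g^{k_\nu}e^{y_\nu}-e^{-y_\nu}}$ with $y_\nu=c_1(L_\nu)$; and the equivariant Chern character $\mathrm{ch}_g(R_i|_F)$. Thus each $\mathrm{def}(g,F)$ is a rational function of $g$ whose poles lie among the roots of unity where some $g^{k_\nu}=1$. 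Since the two expressions for $\mathrm{ind}(g,d_s\otimes R_i)$ agree on an arc, they agree as rational functions of $g$, so $\mathrm{sign}(M,R_i)=\sum_F\mathrm{def}(g,F)$ wherever the right-hand side is defined.

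Let $\xi\in S^1$ be as in Definition \ref{def}. Since $\xi^{k_\nu}=-1\neq1$ for every weight $k_\nu$ occurring at any $F$, the point $\xi$ is not a pole of any $\mathrm{def}(g,F)$, and we may specialize to $g=\xi$, obtaining $\mathrm{sign}(M,R_i)=\sum_F\mathrm{def}(\xi,F)$. (It matters that we specialize the analytically continued ``generic-$g$'' formula, which refers only to the components of $M^{S^1}$: applying the fixed point theorem directly at $\xi$ would give a sum over the components of $M^\xi$, which could a priori be strictly larger than $M^{S^1}$, and the estimate below need not hold on the extra pieces.) At $g=\xi$ each normal factor becomes $\dfrac{-e^{y_\nu}+e^{-y_\nu}}{-e^{y_\nu}-e^{-y_\nu}}=\tanh(y_\nu)$, a power series in $y_\nu$ with vanishing constant term; hence $\prod_{\nu=1}^{n-m}\tanh(y_\nu)$ has lowest degree $2(n-m)=\mathrm{codim}_M(F)$, while the $\mathcal{L}$-class of $TF$ and $\mathrm{ch}_\xi(R_i|_F)$ contribute only terms of nonnegative degree. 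The hypothesis $\dim F=2m<n$ gives $2(n-m)>2m$, so the characteristic class defining $\mathrm{def}(\xi,F)$ has no component in degree $\dim F$, whence $\mathrm{def}(\xi,F)=0$. Summing over the components $F$ yields $\mathrm{sign}(M,R_i)=0$.

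The step requiring the most care is the shape of the local contribution $\mathrm{def}(g,F)$ used in the second paragraph: one needs it in a form that simultaneously exhibits its poles in $g$ and shows that at $g=\xi$ the normal bundle contributes a factor vanishing to order $\mathrm{codim}_M(F)$. This is precisely the mechanism behind Theorem \ref{result1}; the only genuinely new feature is that inserting the extra factor $\mathrm{ch}_\xi(R_i|_F)$, a cohomology class of lowest degree $0$, leaves the degree count untouched.
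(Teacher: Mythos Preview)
Your proposal is correct and follows essentially the same approach as the paper: invoke rigidity (Theorem~\ref{WTB}), write the equivariant index via the Lefschetz fixed-point formula over the components of $M^{S^1}$, specialize the resulting rational function to $g=\xi$, and use the same degree count as in the proof of Theorem~\ref{result1}. Your write-up is in fact more detailed than the paper's (which simply says ``by using the same idea as in the proof of Theorem~\ref{result1}''), and your remark that one must specialize the analytically continued generic-$g$ formula rather than apply the fixed-point theorem directly at $\xi$ is a point the paper leaves implicit.
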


\begin{corollary}
Suppose $M^{2n}$ is a spin manifold admitting a semi-free
$S^{1}$-action. If $\textrm{dim}(M^{S^{1}})< n$, then
$\textrm{sign}(M^{2n}, R_{i})=0$ for all $i$.
\end{corollary}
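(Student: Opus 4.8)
The displayed Corollary is immediate from Theorem \ref{result2}: by the Remark after Definition \ref{def} a semi-free action has all weights $\pm 1$ and is therefore prime (take $\xi = -1$). So the substantive statement is Theorem \ref{result2}, and the plan is to run the proof of Theorem \ref{result1} with the Witten-Taubes-Bott rigidity of the twisted operators $d_{s}\otimes R_{i}$ (Theorem \ref{WTB}) used in place of the rigidity of $d_{s}$.

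Fix $i$ and set $D = d_{s}\otimes R_{i}$, so $\textrm{ind}(D) = \textrm{sign}(M,R_{i})$. Since $M$ is spin, Theorem \ref{WTB} says $D$ is universally rigid, hence $\textrm{ind}(g,D) = \textrm{sign}(M,R_{i})$ for every $g\in S^{1}$. First I would pick a topological generator $g_{0}$ of $S^{1}$, so that $M^{g_{0}} = M^{S^{1}}$, and apply the $G$-signature theorem with coefficients in $R_{i}$ at $g_{0}$. This writes $\textrm{sign}(M,R_{i}) = \sum_{F}\mathcal{I}_{i}(F,g_{0})$, the sum over the connected components $F = F^{2m}$ of $M^{S^{1}}$, where $\mathcal{I}_{i}(F,g)$ is the integral over $[F^{2m}]$ of the product of three things: the Hirzebruch $\mathcal{L}$-class of $\textrm{T}F^{2m}$; the equivariant Chern character $\textrm{ch}_{g}(R_{i}|_{F^{2m}})$, which is a genuine mixed class in nonnegative degrees since $R_{i}$ has finite rank; and, for each normal line bundle $L_{j}$ with Chern root $x_{j}$ and weight $k_{j}$, a universal power series $f(x_{j},g^{k_{j}})$ in $x_{j}$ whose coefficients are rational functions of $g$ with poles only where $g^{k_{j}} = 1$.

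Next I would specialize to the prime element $\xi$. The identity $\textrm{sign}(M,R_{i}) = \sum_{F}\mathcal{I}_{i}(F,g)$ holds for every $g$ with $M^{g} = M^{S^{1}}$, i.e. for all $g$ outside a finite set of roots of unity; its right side is a rational function of $g$ that is regular at $g=\xi$ — regular precisely because $\xi^{k} = -1\neq 1$ for every weight $k$ of every component of $M^{S^{1}}$ — and its left side is a constant, so the right side equals that constant throughout its domain, in particular at $g=\xi$: $\textrm{sign}(M,R_{i}) = \sum_{F}\mathcal{I}_{i}(F,\xi)$. Now at $g=\xi$ the normal factor at $L_{j}$ is $f(x_{j},\xi^{k_{j}}) = f(x_{j},-1)$, and this has \emph{vanishing constant term}: $f(0,-1)$ is the contribution of a fixed point at which the normal rotation is by $-1$, which is $0$ for the (twisted) signature operator — one checks this by applying rigidity to $F\times\mathbb{CP}^{1}$ with the involution trivial on $F$ and $[z_{0}:z_{1}]\mapsto[z_{0}:-z_{1}]$ on $\mathbb{CP}^{1}$, whose signature is $\textrm{sign}(F)\cdot\textrm{sign}(\mathbb{CP}^{1}) = 0$. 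Hence $f(x_{j},-1)$ is divisible by $x_{j}$, so $\mathcal{I}_{i}(F^{2m},\xi)$ is the integral over $[F^{2m}]$ of a class divisible by $\prod_{j=1}^{n-m}x_{j} = e(\nu_{F})\in H^{2(n-m)}(F^{2m})$, all the remaining factors lying in nonnegative degrees. Since $\textrm{dim}F^{2m} = 2m \le \textrm{dim}(M^{S^{1}}) < n$ forces $2(n-m) > 2m$, the Euler class $e(\nu_{F})$, and with it the whole integrand, vanishes on $[F^{2m}]$. Thus $\mathcal{I}_{i}(F,\xi) = 0$ for all $F$, and $\textrm{sign}(M,R_{i}) = 0$.

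I expect the only real work to be bookkeeping: writing the twisted $G$-signature local terms in a normalized form and checking the two properties actually used — that each normal factor $f(x_{j},g^{k_{j}})$ is rational in $g$ and regular at $\xi$, and that it becomes divisible by $x_{j}$ once $g^{k_{j}} = -1$. The exact signs, powers of $i$, and the precise form of $\textrm{ch}_{\xi}(R_{i})$ are delicate but irrelevant, since the vanishing is forced by a crude count of cohomological degrees the moment the normal factors are known to carry the Euler classes of the $L_{j}$. The one genuinely substantive subtlety is that for a prime action that is not semi-free the fixed set $M^{\xi}$ can be strictly larger than $M^{S^{1}}$; passing to a generic $g_{0}$ before specializing to $\xi$, as above, avoids having to analyze those extra components.
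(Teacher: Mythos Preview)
Your proposal is correct and takes essentially the same approach as the paper: rigidity of $d_s\otimes R_i$ turns the Lefschetz fixed-point formula into an identity in the indeterminate $g$, specialization to $g=\xi$ makes each normal factor divisible by $c_1(L_j)$, and the dimension hypothesis then kills the resulting Euler class $e(\nu F^{2m})$. The only difference is cosmetic: the paper writes the local formula out explicitly (with the $u_i$, $v_j$ factors) and simply reads off $f(x,-1)=\frac{1-e^{-x}}{1+e^{-x}}=x\cdot(\text{unit})$ by inspection, whereas you argue for $f(0,-1)=0$ via a model on $F\times\mathbb{CP}^1$---valid but roundabout, and as stated it needs $\textrm{sign}(F)\neq 0$, so you should take $F$ to be a point.
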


\begin{remark}
In \cite{LS}, Landweber and Stong proved two results concerning the
signature and the indices of three twisted Dirac operators, which
also have the same feature as our results in some sense. More
precisely, they showed that (\cite{LS}, Theorem 1), if a closed spin
manifold $M^{2n}$ admits a circle action of \emph{odd type}, then
$\textrm{sign}(M)=0$. Moreover, if this action is semi-free, then
(\cite{LS}, Theorem 2) $\hat{A}(M, T_{\mathbb{C}})=\hat{A}(M,
\Lambda^{2}T_{\mathbb{C}})=\hat{A}(M,
\Lambda^{3}T_{\mathbb{C}}+T_{\mathbb{C}}^{2})=0$, where $\hat{A}(M,
E)$ is the index of the Dirac operator on $M$ twisted by a complex
vector bundle $E$.
\end{remark}

\section{Proof of results}
Let $M^{2n}$ be a oriented manifold admitting an $S^{1}$-action. Let
$F^{2m}$ be a connected component of the fixed point set
$M^{s^{1}}$. As pointed out in Introduction,
$\textrm{T}M^{2n}\big|_{F^{2m}}$ can be decomposed into
$$\textrm{T}M^{2n}\big|_{F^{2m}}=L_{1}\oplus\cdots\oplus L_{n-m}\oplus\textrm{T}F^{2m}.$$
Here $L_{i}$ could be taken as a complex line bundle over $F^{2m}$
with weight $k_{i}$, $1\leq i\leq n-m$.

$F^{2m}$ can be oriented so that all orientations of $L_{1},\cdots,
L_{n-m}$ and $F^{2m}$ taken together yield the orientation of
$M^{2n}$. Let $c_{1}(L_{i})\in H^{2}(F^{2m};\mathbb{Z})$ be the
first Chern class of $L_{i}$. Suppose the total Pontrjagin class of
$F^{2m}$ has the following formal
decomposition
$$p(F^{2m})=1+p_{1}(F^{2m})+\cdots=\prod_{i=1}^{m}(1+x_{i}^{2}),$$
i.e., $p_{i}(F^{2m})$ is the $i$-th elementary symmetry polynomial
of $x_{1}^{2},\cdots, x_{m}^{2}$.

With these notations set up, we have the following important lemma,
which should be well-known for experts (cf. \cite{HBJ}, $\S 5.8$),
although, according to the authors acknowledge, nobody state it
explicitly as follows.
\begin{lemma}\label{lemma}
Let $g$ be an indeterminate. Then the rational function of $g$
\be\sum_{F^{2m}}\big\{\big[\big(\prod_{i=1}^{m}x_{i}\frac{1+e^{-x_{i}}}
{1-e^{-x_{i}}}\big)\big(\prod_{j=1}^{n-m}\frac{1+g^{k_{j}}e^{-c_{1}(L_{j})}}{1-g^{k_{j}}e^{-c_{1}(L_{j})}}\big)\big]\cdot[F^{2m}]\big\}\nonumber\ee
identically equals to $\textrm{sign}(M)$. Here $[F^{2m}]$ is the
fundamental class of $F^{2m}$ determined by the orientation and the
sum is over all the connected components of $M^{S^{1}}$.
\end{lemma}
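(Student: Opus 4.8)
The plan is to read this off the Atiyah--Singer $G$-signature theorem together with the universal rigidity of the signature operator $d_s$. The displayed sum will turn out to be, for generic $g\in S^1$, exactly the fixed point contribution computing the equivariant signature $\textrm{sign}(g,M)$; rigidity forces $\textrm{sign}(g,M)$ to be the constant $\textrm{sign}(M)$; and an identity-of-rational-functions argument then removes the word ``generic''.

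First I would apply the $G$-signature theorem (equivalently, the Atiyah--Bott--Segal--Singer Lefschetz fixed point formula for the equivariant signature operator) to an element $g\in S^1$ chosen so that $M^g=M^{S^1}$; this excludes only finitely many $g$, namely the roots of unity for which some normal sub-bundle acquires a trivial rotation factor. For such $g$ the theorem writes $\textrm{sign}(g,M)=\textrm{ind}(g,d_s)$ as a sum over the components $F^{2m}$ of $M^{S^1}$ of a characteristic number of $F^{2m}$ assembled from two pieces: the tangential piece $\textrm{T}F^{2m}$, which contributes the multiplicative $L$-class in the shape $\prod_{i=1}^{m}x_i\frac{1+e^{-x_i}}{1-e^{-x_i}}$ (this expression is invariant under $x_i\mapsto -x_i$, hence a well-defined function of the Pontrjagin roots $x_i^2$); and the normal piece $L_1\oplus\cdots\oplus L_{n-m}$, where by the very definition of the weights $g$ acts on the complex line bundle $L_j$ by multiplication by $g^{k_j}$, so that $L_j$ contributes the single factor $\frac{1+g^{k_j}e^{-c_1(L_j)}}{1-g^{k_j}e^{-c_1(L_j)}}$. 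With the orientation of $F^{2m}$ normalized as explained before the lemma, evaluating the product on $[F^{2m}]$ and summing reproduces exactly the displayed expression; so that expression equals $\textrm{sign}(g,M)$ for all but finitely many $g\in S^1$. (The detailed transcription of the $G$-signature formula into this shape is in essence the computation of \cite{HBJ}, \S 5.8.)

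Next I would invoke rigidity. As recalled in the Introduction, $d_s$ is universally rigid: $\ker d_s$ and $\textrm{coker}\,d_s$ are identified with subspaces of the de Rham cohomology of $M$, on which the connected group $S^1$ acts trivially, so $\textrm{ind}(g,d_s)=\textrm{ind}(d_s)=\textrm{sign}(M)$ for every $g\in S^1$. Combined with the previous paragraph, the displayed rational function of $g$ takes the constant value $\textrm{sign}(M)$ at infinitely many $g\in S^1$. Finally, that sum genuinely is a rational function of $g$: each $c_1(L_j)$ is nilpotent, so $e^{-c_1(L_j)}$ is a polynomial in $c_1(L_j)$ and $\bigl(1-g^{k_j}e^{-c_1(L_j)}\bigr)^{-1}$ expands (away from $g^{k_j}=1$) as a polynomial in $c_1(L_j)$ with coefficients rational in $g$; extracting the top-degree component and pairing with $[F^{2m}]$ leaves a finite sum of rational functions of $g$. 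A rational function that agrees with a constant at infinitely many points equals that constant identically, and this is the asserted identity.

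The one step that needs care, and the one I would double-check against the literature, is matching conventions in the $G$-signature fixed point formula — the normalizations built into the formula and the orientation of $F^{2m}$ — so that the tangential factor comes out as exactly $\prod_{i=1}^m x_i\frac{1+e^{-x_i}}{1-e^{-x_i}}$ rather than a superficially similar expression, and confirming that each $L_j$, being a \emph{complex line} bundle rather than a merely oriented real $2$-plane bundle, contributes exactly one Chern root $c_1(L_j)$ and one rotation factor $g^{k_j}$, with no stray factors of $2$ or sign ambiguities. Once this transcription is done correctly, the rigidity input and the rational-function argument are routine.
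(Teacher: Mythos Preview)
Your proposal is correct and follows essentially the same route as the paper's own proof: apply the $G$-signature theorem at a suitable $g\in S^{1}$ (the paper uses topological generators, you use the cofinitely many $g$ with $M^{g}=M^{S^{1}}$) to identify the displayed sum with $\textrm{sign}(g,M)$, invoke the rigidity of $d_{s}$ to get $\textrm{sign}(g,M)=\textrm{sign}(M)$, and conclude by the rational-function identity principle. The paper's version is terser --- it does not spell out, as you do, why the expression is genuinely rational in $g$ or flag the convention-matching --- but the argument is the same.
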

\begin{proof}
Let $g\in S^{1}$ be a topological generator. Then the fixed point
set of the action of $g$ on $M$ are exactly $M^{s^{1}}$. So the
$G$-signature theorem (\cite{AS}, p.582) tells us that
\be\label{signature formula}\textrm{sign}(g,
M^{2n})=\sum_{F^{2m}}\big\{\big[\big(\prod_{i=1}^{m}x_{i}\frac{1+e^{-x_{i}}}
{1-e^{-x_{i}}}\big)\big(\prod_{j=1}^{n-m}\frac{1+g^{k_{j}}e^{-c_{1}(L_{j})}}{1-g^{k_{j}}e^{-c_{1}(L_{j})}}\big)\big]\cdot[F^{2m}]\big\}\ee

Here $\textrm{sign}(g, M^{2n})$ is the equivariant index of the
signature operator at $g\in S^{1}$. According to the rigidity of the
signature operator, we have $\textrm{sign}(g,
M^{2n})\equiv\textrm{sign}(M^{2n})$. Therefore (\ref{signature
formula}) holds for a dense subset of $S^{1}$ (the topological
generators are dense in $S^{1}$), which means (\ref{signature
formula}) is in fact an identity for an indeterminate $g$.
\end{proof}

\begin{remark}
\begin{enumerate}
\item
Lemma \ref{lemma} was used in (\cite{HBJ}, $\S 5.8$), by putting
$g=0$, to obtain the famous formula
$$\textrm{sign}(M^{2n})=\sum_{F^{2m}}\textrm{sign}(F^{2m}),$$
 of which several proofs are given by Atiyah-Hirzebruch (\cite{AH}, $\S 3$), Hattori-Taniguchi (\cite{HT}, $\S 4$),
 and Witten (\cite{Wi1}, $\S 3$) respectively.

\item
When $M^{S^{1}}$ consists of isolated points, Lemma \ref{lemma} was
used by Ding (\cite{Di}, p.3947) to obtain some interesting results
concerning the representations on the isolated fixed points.
\end{enumerate} \end{remark}

\emph{Proof of Theorem \ref{result1}}.
\begin{proof}
Now suppose $M^{4k}$ has a prime $S^{1}$-action. Let $\xi\in S^{1}$
be the desired element as in Definition \ref{def}. Then we have
\be\label{proof}\begin{split}\big[\prod_{j=1}^{2k-m}\frac{1+g^{k_{j}}e^{-c_{1}(L_{j})}}{1-g^{k_{j}}e^{-c_{1}(L_{j})}}\big]\big|_{g=\xi}&=\prod_{j=1}^{2k-m}\frac{1-e^{-c_{1}(L_{j})}}{1+e^{-c_{1}(L_{j})}}\\
&=\prod_{j=1}^{2k-m}\frac{c_{1}(L_{j})-\frac{1}{2}c_{1}^{2}(L_{j})+\cdots}{2-c_{1}(L_{j})+\cdots}\\
&=\big(\prod_{j=1}^{2k-m}c_{1}(L_{j})\big)\cdot\prod_{j=1}^{2k-m}\frac{1-\frac{1}{2}c_{1}(L_{j})+\cdots}{2-c_{1}(L_{j})+\cdots}\\
&=e(\nu
F^{2m})\cdot\prod_{j=1}^{2k-m}\frac{1-\frac{1}{2}c_{1}(L_{j})+\cdots}{2-c_{1}(L_{j})+\cdots},
\end{split}\ee
where $e(\nu F^{2m})\in H^{4k-2m}(F^{2m};\mathbb{Z})$ is the Euler
class of the normal bundle of $F^{2m}$ in
$M^{4k}$.

If $\textrm{dim}(M^{S^{1}})< 2k$, then $4k-2m>2m,$ which means
$e(\nu F^{2m})=0$ and so by Lemma \ref{lemma}
$\textrm{sign}(M^{4k})=0$. This completes the proof.
\end{proof}

\emph{Proof of Theorem \ref{result2}}.
\begin{proof}
Let $R_{i}$ be the complex vector bundles defined in Introduction.
Then for each topological generator $g\in S^{1}$, the equivariant
index of the elliptic operator $d_{s}\otimes R_{i}$,
$\textrm{sign}(g, M^{2n}, R_{i})$, like $G$-signature theorem, could
be computed in terms of the local invariants of the fixed point set
$M^{S^{1}}$. This is given by a general Lefschetz fixed point
formula of Atiyah-Bott-Segal-Singer (\cite{AS}, p.254-p.258).
Instead of writing down the general form of this formula, we only
indicate that, for $\textrm{sign}(g, M^{2n}, R_{i})$, this formula
is of the following form.
$$\sum^{+\infty}_{i=0}q^{i}\cdot\textrm{sign}(g, M^{2n}, R_{i})=\sum_{F^{2m}}\big\{\prod_{i=1}^{m}\big[(x_{i}\frac{1+e^{-x_{i}}}
{1-e^{-x_{i}}}\big)\cdot
u_{i}\big]\prod_{j=1}^{n-m}\big[\big(\frac{1+g^{k_{j}}e^{-c_{1}(L_{j})}}{1-g^{k_{j}}e^{-c_{1}(L_{j})}}\big)\cdot
v_{j}\big]\big\}\cdot[F^{2m}],$$

where
$$u_{i}=\prod_{r=1}^{+\infty}\frac{(1+q^{r}e^{-x_{i}})(1+q^{r}e^{x_{i}})}{(1-q^{r}e^{-x_{i}})(1-q^{r}e^{x_{i}})},$$
and
$$v_{j}=\prod_{r=1}^{+\infty}\frac{(1+q^{r}g^{k_{j}}e^{-c_{1}(L_{j})})(1+q^{r}g^{-k_{j}}e^{c_{1}(L_{j})})}{(1-q^{r}g^{k_{j}}e^{-c_{1}(L_{j})})(1-q^{r}g^{-k_{j}}e^{c_{1}(L_{j})})}.$$
 We recommend the readers the references \big((\cite{Li1},
\S 1 and \S 5) or (\cite{DJ}, $\S 2.4$)\big) for a detailed
description of $\textrm{sign}(g, M^{2n}, R_{i})$ in terms of the
local data of $M^{S^{1}}$.

Rigidity theorem \ref{WTB} says that, for an indeterminate $g$, the
following identity holds
$$\sum^{+\infty}_{i=0}q^{i}\cdot\textrm{sign}(M^{2n}, R_{i})\equiv\sum_{F^{2m}}\big\{\prod_{i=1}^{m}\big[(x_{i}\frac{1+e^{-x_{i}}}
{1-e^{-x_{i}}}\big)\cdot
u_{i}\big]\prod_{j=1}^{n-m}\big[\big(\frac{1+g^{k_{j}}e^{-c_{1}(L_{j})}}{1-g^{k_{j}}e^{-c_{1}(L_{j})}}\big)\cdot
v_{j}\big]\big\}\cdot[F^{2m}],$$
 By using the same idea as in the proof of
Theorem \ref{result1}, we can get the conclusion of Theorem
\ref{result2}.
\end{proof}

\section{Conclusion remarks}
As we have seen, the key idea in the proofs is to extract a
cohomology class $e(\nu F^{2m})$ from the right-hand side of
(\ref{signature formula}), by giving a special value on $g$. In
fact, for a general compact Lie group $G$ acting on $M^{2n}$ and
$g\in G$, the $G$-signature theorem is of the following form
(\cite{AS}, p.582) \be\label{G-s}\textrm{sign}(g,M)=\sum_{F\in
M^{g}}[e\big(N^{g}(-1)\big)\cdot u]\cdot[F],\ee where $F$ is a
connected component of the fixed point set of $g$, $M^{g}$ (rather
than the fixed point set of the whole $G$), $e\big(N^{g}(-1)\big)$
is the Euler class of a subbundle of the normal bundle on $M^{g}$,
corresponding to the eigenvalue $-1$ of the representation of $g$ on
the normal bundle of $F$, and $u\in H^{\ast}(F)$.

Consequently, the right-hand side of (\ref{G-s}) vanishes if, for
every $F$, the fiber dimension of $N^{g}(-1)$ is greater than the
dimension of $F$. This is the corollary $6.13$ in \cite{AS} on page
582. It is this corollary that makes Atiyah and Singer to reprove
Theorem \ref{CF} in some cases, because for an involution, $-1$ is
the \emph{only} eigenvalue on the normal bundle of the fixed point
set. While in the circle case, for a topological generator
$g=e^{2\pi\sqrt{-1}\theta}\in S^{1}$, $-1$ in \emph{not}
 the eigenvalue ($g$ is a topological generator if and only of $\theta$ is irrational,
then $g^{k}\neq -1$ for all weights $k$). So we have to construct
 a cohomology class \big($e(\nu F^{2m})$ in (\ref{proof})\big) similar to $e\big(N^{g}(-1)\big)$ in (\ref{G-s}) artificially.
 This is the origin of our Definition \ref{def}.

\end{document}